\documentclass[11pt]{article}

\usepackage{amsmath,amssymb,amsthm}
\usepackage{enumitem}
\usepackage{booktabs}
\usepackage{graphicx}
\usepackage{flafter}
\usepackage{hyperref}
\hypersetup{
  hidelinks,
  pdftitle={G-Exponential Families through Probability Coordinates},
  pdfauthor={Manuela-Simona Cojocea}
}

\newtheorem{theorem}{Theorem}[section]
\newtheorem{proposition}[theorem]{Proposition}

\newtheorem{corollary}[theorem]{Corollary}
\theoremstyle{definition}
\newtheorem{definition}[theorem]{Definition}

\newtheorem{remark}[theorem]{Remark}
\newtheorem{example}[theorem]{Example}

\newcommand{\E}{\mathbb{E}}
\newcommand{\Prob}{\mathbb{P}}
\newcommand{\R}{\mathbb{R}}
\newcommand{\Var}{\operatorname{Var}}
\newcommand{\Unif}{\mathrm{Unif}}
\newcommand{\KL}{\operatorname{KL}}
\newcommand{\IF}{\operatorname{IF}}

\title{$G$-Exponential Families through Probability Coordinates}
\author{Manuela-Simona Cojocea}
\date{}

\begin{document}

\maketitle

\begin{abstract}
We develop a probability-geometric construction of generalized exponential
families based on the probability coordinate charts of the companion papers:
a classical exponential family $p_\eta$ on the unit interval is transported
to value space through a chart $G$, yielding the \emph{$G$-exponential
family} $f_\eta(x)=p_\eta(G(x))\,g(x)$. The construction deforms the
geometry in which exponential structure is represented, never the
exponential function itself, in deliberate contrast with Tsallis- and
Kaniadakis-type algebraic deformations.

The transported families inherit the full structure of the companion
framework. The probability coordinate $U=G(X)$ of a family
member has law exactly $p_\eta$, so initial Kolmogorov moments are pullbacks
of classical moments of $p_\eta$; in particular, for the canonical family
the probability barycenter is the pullback of the mean parameter,
$b_G(f_\eta)=G^{-1}(A'(\eta))$, and the Fisher information coincides with
the Kolmogorov variance. Tail behaviour is a family invariant inherited
from the chart: every member is tail-equivalent to the chart density, so
heavy tails arise from geometry and are neutral under tilting in
probability coordinates---the exact opposite of classical exponential
tilting. On the statistical side, the sufficient statistic is bounded by
construction, maximum likelihood reduces to moment matching in probability
coordinates, and the maximum likelihood estimator has bounded influence
function: robustness arises from geometric compactification, in the sense
of the companion inference paper. We further characterize $G$-exponential
families as minimizers of relative entropy to the chart law under
probability-coordinate moment constraints, and show that transport leaves
the information geometry of the family invariant. A worked example based on
the Cauchy chart illustrates the theory with explicit formulas, and a Monte
Carlo study on this family---in which every member has infinite mean---confirms
the efficient, calibrated, and bounded-influence behaviour of the
transported maximum likelihood estimator.
\end{abstract}

\section{Introduction}\label{sec:intro}

Classical exponential families occupy a central position in probability and
statistics: they support finite-dimensional sufficient statistics, convex
likelihood theory, entropy-maximization characterizations, and a rich
information geometry \cite{BarndorffNielsen1978,Brown1986,Amari2016}. A
family of densities belongs to this class when it can be written in the
form
\[
f_\theta(x)=h(x)\exp\bigl(\eta(\theta)T(x)-A(\theta)\bigr),
\]
so that the logarithm of the density is affine in the sufficient statistic.
The exponential decay implicit in this representation, however, is a
value-space property, and it excludes many models of practical importance:
heavy-tailed laws, power-law phenomena, and distributions with boundary
concentration do not admit exponential representations in value space.

A substantial literature addresses this limitation by deforming the
exponential function itself. Tsallis-type $q$-exponentials
\cite{Tsallis1988}, Kaniadakis exponentials \cite{Kaniadakis2002}, and the
generalized exponential families organized around deformed logarithms
\cite{Naudts2011} replace $\exp$ by a modified function whose slower decay
accommodates power-law tails. A related line, going back to R\'enyi
\cite{Renyi1961}, deforms the aggregation of probability weights in
entropy-based constructions. In all these approaches the deformation is
\emph{algebraic}: the exponential or logarithmic operation is modified
directly.

The present paper takes a different route, dictated by the philosophy of
the companion papers \cite{Cojocea2026a,Cojocea2026b}: we deform the
\emph{geometry} in which exponential structure is represented, and never
the exponential function itself. In the framework of \cite{Cojocea2026a}, a
continuous distribution function is interpreted as a \emph{probability
coordinate chart} transporting value space into the unit interval, and
averaging performed in probability coordinates defines the
\emph{probability barycenter} $b_G(X)=G^{-1}(\E[G(X)])$, a Kolmogorov
expectation in the population-level sense of de~Carvalho
\cite{deCarvalho2016}; the construction descends from the classical theory
of generalized means \cite{Kolmogorov1930,Nagumo1930,Aczel1966}. The
companion paper \cite{Cojocea2026b} develops the associated statistical
inference and shows that robustness arises from \emph{geometric
compactification}: probability coordinates are bounded, so the estimators
built from them have bounded influence without any reweighting or
truncation.

Here the same mechanism is applied to exponential structure. We place a
classical exponential family $p_\eta$ on the probability space $(0,1)$ and
transport it to value space through a chart $G$, obtaining the
\emph{$G$-exponential family}
\[
f_\eta(x)=p_\eta(G(x))\,g(x)
        =g(x)\exp\bigl(\eta\,T(G(x))-A(\eta)\bigr),
\]
where $g=G'$ is the chart density. Exponentiality remains classical and
untouched in probability coordinates; heavy tails, boundary concentration,
and robustness properties of the transported family are produced entirely
by the geometry of the chart. The construction is an exponential tilt of
the chart law performed in probability coordinates, and because the tilting
factor is bounded, it exists for every chart---including heavy-tailed
charts, for which classical exponential tilting is unavailable.

The contributions of the paper are the following.
\begin{enumerate}[leftmargin=2em]
\item We define $G$-exponential families and characterize them by a
pushforward identity: $X\sim f_\eta$ if and only if the probability
coordinate $U=G(X)$ has law $p_\eta$
(Section~\ref{sec:gexp}). Normalization and the transported log-density
structure follow immediately, and the carrier of the family on $(0,1)$ can
always be absorbed into the chart, so that base measure and geometry are
identified.
\item We compute the Kolmogorov structure of the transported families in
the sense of \cite{Cojocea2026a}: initial Kolmogorov moments are pullbacks
of classical moments of $p_\eta$, the probability barycenter of the
canonical family is the pullback of the mean parameter,
$b_G(f_\eta)=G^{-1}(A'(\eta))$, the Kolmogorov variance equals the Fisher
information, and the moment-determinacy theorem of \cite{Cojocea2026a}
identifies each member from its Kolmogorov moment sequence
(Section~\ref{sec:kolmogorov}).
\item We prove a tail-equivalence theorem: every member of a
$G$-exponential family is tail-equivalent to the chart density, so the
regular-variation index of the tails is a family invariant
(Section~\ref{sec:tails}). Tilting in probability coordinates is
tail-neutral, in exact opposition to classical exponential tilting.
\item We develop the statistical structure in the format of
\cite{Cojocea2026b}: the sufficient statistic is bounded by construction,
maximum likelihood is moment matching in probability coordinates, and the
maximum likelihood estimator is asymptotically normal with \emph{bounded}
influence function---robustness from geometric compactification, with an
honest accounting of the anchoring caveat inherited from the fixed chart
(Section~\ref{sec:inference}).
\item We characterize $G$-exponential families variationally, as
minimizers of relative entropy to the chart law under probability-coordinate
moment constraints, and we show that the transport leaves relative entropy
and Fisher information invariant: the chart changes the geometry of value
space, not the information geometry of the family
(Section~\ref{sec:entropy}).
\end{enumerate}

The paper is organized as follows. Section~\ref{sec:framework} recalls the
probability-coordinate framework of the companion papers and fixes
notation. Section~\ref{sec:gexp} defines $G$-exponential families and
establishes their basic transport structure. Section~\ref{sec:kolmogorov}
computes their Kolmogorov moments and barycenters.
Section~\ref{sec:tails} establishes tail inheritance.
Section~\ref{sec:inference} develops estimation and robustness.
Section~\ref{sec:entropy} treats entropy and information invariance.
Section~\ref{sec:example} works out the Cauchy chart in full detail, and
Section~\ref{sec:simulation} reports a Monte Carlo study on that family.
Section~\ref{sec:deformed} compares the construction with
deformed-exponential and entropy-based frameworks.
Section~\ref{sec:intrinsic} discusses the programmatic question of
intrinsically geometric exponentiality, and
Section~\ref{sec:discussion} concludes.

\section{The probability-coordinate framework}\label{sec:framework}

We briefly recall the framework of the companion papers; definitions are
aligned with \cite{Cojocea2026a}, to which we refer for the geometric
development, and are restated here only to fix notation.

\begin{definition}[Probability coordinate chart \cite{Cojocea2026a}]
\label{def:chart}
Let $\widetilde G:\R\to[0,1]$ be a continuous cumulative distribution
function and suppose there exists an interval $I=(a,b)\subseteq\R$, with
$-\infty\le a<b\le+\infty$, such that the restriction
$G=\widetilde G|_I$ is strictly increasing and satisfies
$\lim_{x\to a^+}G(x)=0$ and $\lim_{x\to b^-}G(x)=1$. The resulting
bijection $G:I\to(0,1)$ is called the \emph{probability coordinate chart}
induced by $\widetilde G$.
\end{definition}

Throughout the paper, charts are assumed absolutely continuous with
density $g=G'>0$ on $I$; we refer to the law with distribution function
$G$ as the \emph{chart law}. For an $I$-valued random variable $X$, the
\emph{probability coordinate} of $X$ is $U=G(X)\in(0,1)$, and the
\emph{probability barycenter} (or Kolmogorov expectation induced by $G$)
is
\[
b_G(X)=G^{-1}\bigl(\E[G(X)]\bigr),
\]
which exists for arbitrary $X$ because $G(X)$ is bounded
\cite{Cojocea2026a}. For $r\ge1$, the \emph{initial Kolmogorov moment} of
order $r$ is
\begin{equation}\label{eq:initial-moment}
M_r^{(G)}(X)=G^{-1}\bigl(\E[(G(X))^r]\bigr),
\end{equation}
and, with $\bar u=\E[G(X)]$, the \emph{centred Kolmogorov moments}
$\E[(G(X)-\bar u)^r]$ are reported on the probability scale and are
deliberately not pulled back through $G^{-1}$; the \emph{Kolmogorov
variance} is $V_G(X)=\Var(G(X))\in[0,\tfrac14]$
\cite{Cojocea2026a}. When the chart is intrinsic, $G=F_X$ with $X$
continuous, the probability integral transform gives
$F_X(X)\sim\Unif(0,1)$, the barycenter is the median, and
$\E[(F_X(X))^r]=1/(r+1)$.

The statistical theory of these functionals---consistency and asymptotic
normality of plug-in estimators, the degeneracy of the fully empirical
intrinsic barycenter, estimated charts, and the robustness analysis through
influence functions---is developed in \cite{Cojocea2026b}. We will use its
formats and its honest-accounting conventions when we discuss inference in
Section~\ref{sec:inference}.

\section{\texorpdfstring{$G$}{G}-exponential families}\label{sec:gexp}

\subsection{Exponential structure in probability coordinates}

Classical exponential families impose exponential structure in value
space. The present construction imposes it in probability coordinates
instead.

\begin{definition}[Exponential family on probability space]
\label{def:expfam01}
Let $T:(0,1)\to\R$ be measurable and non-constant, and let
\[
N=\Bigl\{\eta\in\R:\int_0^1 e^{\eta T(u)}\,du<\infty\Bigr\}
\]
be the natural parameter set. For $\eta\in N$, define the density on
$(0,1)$
\begin{equation}\label{eq:p-eta}
p_\eta(u)=\exp\bigl(\eta\,T(u)-A(\eta)\bigr),
\qquad
A(\eta)=\log\int_0^1 e^{\eta T(u)}\,du .
\end{equation}
\end{definition}

We use the natural parameter throughout. A curved parametrization
$\eta=\eta(\theta)$ can be composed afterwards and changes nothing
structural. On the bounded domain $(0,1)$ the natural parameter
set is particularly generous: if $T$ is bounded---for instance, if $T$
extends continuously to $[0,1]$---then $N=\R$.

\begin{definition}[$G$-exponential family]\label{def:gexp}
Let $G$ be a probability coordinate chart on $I$ with density $g$, and let
$\{p_\eta:\eta\in N\}$ be an exponential family on $(0,1)$ as in
Definition~\ref{def:expfam01}. The \emph{$G$-exponential family} generated
by $(G,T)$ is the family of densities on $I$
\begin{equation}\label{eq:gexp}
f_\eta(x)=p_\eta(G(x))\,g(x)
        =g(x)\exp\bigl(\eta\,T(G(x))-A(\eta)\bigr),
\qquad \eta\in N .
\end{equation}
\end{definition}

The construction transports exponential structure through the chart:
exponentiality is classical and unchanged on the probability scale, while
the geometry of the resulting family on value space is entirely determined
by $G$. The deformation is geometric, not algebraic.

\subsection{The pushforward characterization}

The defining property of the transported family is most transparently
expressed through the probability coordinate itself.

\begin{proposition}[Pushforward characterization]\label{prop:pushforward}
Let $G$ be a chart on $I$ with density $g$, and let $X$ be an $I$-valued
random variable. Then $X$ has density $f_\eta$ of \eqref{eq:gexp} if and
only if the probability coordinate $U=G(X)$ has density $p_\eta$ on
$(0,1)$. In particular, each $f_\eta$ is a probability density on $I$.
\end{proposition}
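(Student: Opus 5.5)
The argument is a change of variables through the chart, made rigorous at the level of distribution functions so that no regularity beyond what Definition~\ref{def:chart} and the standing assumption $g=G'>0$ provide is needed. Since $G:I\to(0,1)$ is a strictly increasing continuous bijection, its inverse $G^{-1}:(0,1)\to I$ is also strictly increasing and continuous. Consequently, for an $I$-valued $X$ and $U=G(X)$, we have
\[
\Prob(U\le u)=\Prob\bigl(X\le G^{-1}(u)\bigr)\quad\text{for every } u\in(0,1),
\]
and equivalently $\Prob(X\le x)=\Prob(U\le G(x))$ for every $x\in I$. Hence the laws of $X$ and $U$ determine each other.

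\emph{Forward direction.} Suppose $X$ has density $f_\eta$. For $u\in(0,1)$, write
\[
\Prob(U\le u)=\int_a^{G^{-1}(u)} p_\eta(G(x))\,g(x)\,dx .
\]
Substitute $v=G(x)$. Because $G$ is absolutely continuous and increasing with derivative $g$, the substitution rule for absolutely continuous monotone maps applies to the nonnegative measurable integrand $p_\eta\circ G$. This gives $\int_0^u p_\eta(v)\,dv$, so $U$ has density $p_\eta$.

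\emph{Converse.} Suppose $U$ has density $p_\eta$. For $x\in I$,
\[
\Prob(X\le x)=\Prob(U\le G(x))=\int_0^{G(x)}p_\eta(v)\,dv=\int_a^x p_\eta(G(t))\,g(t)\,dt,
\]
by the same substitution read in the other direction. Thus $X$ has density $f_\eta$.

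\emph{Normalization.} Each $f_\eta$ is nonnegative, and the same substitution with the upper limit taken to $b$ gives
\[
\int_I f_\eta(x)\,dx=\int_0^1 p_\eta(v)\,dv=e^{-A(\eta)}\int_0^1 e^{\eta T(v)}\,dv=1 .
\]
This uses $\eta\in N$, which makes $A(\eta)$ finite, together with $G(a^+)=0$ and $G(b^-)=1$. So $f_\eta$ is a probability density on $I$.

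\emph{Main obstacle.} The only delicate point is justifying the substitution $\int_a^{y}h(G(x))g(x)\,dx=\int_0^{G(y)}h(v)\,dv$ for a general nonnegative measurable $h$ such as $p_\eta$, which need not be continuous. I would proceed in two steps. First, check the identity for indicators $h=\mathbf 1_{(0,c]}$: both sides equal $\min(G(y),c)$, using $\int_a^{z}g=G(z)$, which follows from absolute continuity and $G(a^+)=0$. Then extend to all nonnegative measurable $h$ by a monotone class argument combined with monotone convergence. Equivalently, the step states that the pushforward of the measure $g(x)\,dx$ on $I$ under $G$ is Lebesgue measure on $(0,1)$.
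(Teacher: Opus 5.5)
Your proof is correct and follows the same route as the paper: you compare distribution functions through $\Prob(U\le u)=\Prob(X\le G^{-1}(u))$, substitute $v=G(t)$, and get normalization by letting the upper limit tend to the endpoint of $I$. The differences are only refinements. You justify the substitution for a merely measurable $p_\eta$ by identifying the pushforward of $g(x)\,dx$ under $G$ with Lebesgue measure on $(0,1)$, and in the converse you write $\Prob(X\le x)$ directly as $\int_a^x f_\eta$ instead of differentiating it, which avoids the absolute-continuity step the paper leaves implicit.
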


\begin{proof}
Suppose $X$ has density $f_\eta$. For $u\in(0,1)$, since $G:I\to(0,1)$ is
an increasing bijection,
\[
\Prob(U\le u)
=\Prob\bigl(X\le G^{-1}(u)\bigr)
=\int_a^{G^{-1}(u)}p_\eta(G(t))\,g(t)\,dt
=\int_0^u p_\eta(v)\,dv,
\]
by the substitution $v=G(t)$. Hence $U$ has density $p_\eta$. Conversely,
if $U=G(X)$ has density $p_\eta$, then $X=G^{-1}(U)$ and, for $x\in I$,
$\Prob(X\le x)=\int_0^{G(x)}p_\eta(v)\,dv$, whose derivative in $x$ is
$p_\eta(G(x))g(x)=f_\eta(x)$. Taking $u\to1^-$ in the first display gives
$\int_I f_\eta=\int_0^1 p_\eta=1$, so $f_\eta$ is a probability density.
\end{proof}

\begin{remark}[Transport of the log-density]\label{rem:transport-log}
Taking logarithms in \eqref{eq:gexp},
\[
\log f_\eta(x)=\eta\,T(G(x))-A(\eta)+\log g(x),
\]
so on value space the family is exponential \emph{in the transported
statistic} $T\circ G$, with carrier $g$. All exponential-family structure
(convexity of $A$, sufficiency, likelihood theory) is preserved; what
changes is the geometry through which the statistic reads the data, since
$T\circ G$ measures probabilistic location rather than magnitude.
\end{remark}

\begin{remark}[Exponential tilting in probability coordinates]
\label{rem:tilting}
If $0\in N$---which always holds when $T$ is bounded---then $f_0=g$: the
chart law itself is a member of the family, and \eqref{eq:gexp} exhibits
every other member as an exponential tilt of the chart law by the
\emph{bounded} factor $\exp(\eta T(G(x))-A(\eta))$. Two consequences
deserve emphasis. First, the tilt exists for every chart, including
heavy-tailed charts for which the classical tilt $e^{\eta x}$ is not
integrable for any $\eta\neq0$. Second, the tilting acts on the geometry
of representation and not on the underlying measure-theoretic weighting of
tail events, in contrast with escort transformations
\cite{Tsallis1988}, which reweight the density nonlinearly; the comparison
of geometric deformation with measure deformation is developed in
\cite{Cojocea2026a}.
\end{remark}

\begin{remark}[The carrier can be absorbed into the chart]
\label{rem:carrier}
Definition~\ref{def:expfam01} takes Lebesgue measure as base measure on
$(0,1)$. A family with carrier, $p_\eta(u)=h(u)\exp(\eta T(u)-A(\eta))$
with $h>0$ and $0<\int_0^1 h<\infty$, reduces to the carrier-free case by
a change of chart: with $H(u)=\int_0^u h/\int_0^1 h$, the composition
$G_h=H\circ G$ is again a probability coordinate chart on $I$, and the
transported family generated by $(G,T)$ with carrier $h$ coincides, up to
the reparametrization $T\mapsto T\circ H^{-1}$, with the carrier-free
$G_h$-exponential family. In probability geometry, base measure and
coordinate geometry are therefore interchangeable, and we take $h\equiv1$
without loss of generality.
\end{remark}

\begin{remark}[Classical exponential families as a boundary case]
\label{rem:classical}
The classical natural exponential family generated by the chart law has
density
\[
f_\eta(x)=g(x)e^{\eta x-A(\eta)}.
\]
It is formally the $G$-exponential family with statistic $T=G^{-1}$, the
inverse chart itself. This statistic is
unbounded precisely when $I$ is unbounded, and then $N$ may collapse: for
heavy-tailed charts, $N=\{0\}$. The boundedness of $T$ is thus the exact
frontier between the classical regime, where tilting modifies tails and
may fail to exist, and the probability-coordinate regime, where tilting is
universal and, as shown in Section~\ref{sec:tails}, tail-neutral.
\end{remark}

\subsection{The canonical family}

The simplest choice of statistic is the identity in probability
coordinates.

\begin{example}[Canonical $G$-exponential family]\label{ex:canonical}
Take $T(u)=u$. Then $N=\R$ and, for $\eta\neq0$,
\begin{equation}\label{eq:canonical-A}
A(\eta)=\log\frac{e^\eta-1}{\eta},
\qquad
A'(\eta)=\frac{1}{1-e^{-\eta}}-\frac1\eta,
\qquad
A''(\eta)=\frac{1}{\eta^2}-\frac{e^\eta}{(e^\eta-1)^2},
\end{equation}
with $A(0)=0$, $A'(0)=\tfrac12$, $A''(0)=\tfrac1{12}$ by continuity. The
probability-space family $p_\eta(u)=\eta e^{\eta u}/(e^\eta-1)$ is the
truncated exponential family on $(0,1)$, and the transported family
\[
f_\eta(x)=\frac{\eta\,e^{\eta G(x)}}{e^\eta-1}\,g(x)
\]
tilts the chart law towards its upper tail for $\eta>0$ and towards its
lower tail for $\eta<0$. The value $A''(0)=\tfrac1{12}$ is the variance of
the uniform law, that is, the Kolmogorov variance of the chart law under
its own chart, consistent with the intrinsic computations of
\cite{Cojocea2026a}.
\end{example}

\section{Kolmogorov structure of the transported families}
\label{sec:kolmogorov}

Here we compute the framework's own functionals---barycenters and
Kolmogorov moments---for the new families. The pushforward characterization
makes the calculation exact \cite{Cojocea2026a,Cojocea2026b}.

\begin{proposition}[Kolmogorov moments are pulled-back moments of
$p_\eta$]\label{prop:moments}
Let $X\sim f_\eta$ as in \eqref{eq:gexp}. Then for every $r\ge1$,
\[
\begin{aligned}
\E[(G(X))^r]
  &=\int_0^1 u^r\,p_\eta(u)\,du,\\
M_r^{(G)}(X)
  &=G^{-1}\Bigl(\int_0^1 u^r p_\eta(u)\,du\Bigr).
\end{aligned}
\]
All initial Kolmogorov moments exist, the centred Kolmogorov moments of
$X$ coincide with the centred moments of $p_\eta$ on the probability
scale, and $V_G(X)=\Var_{p_\eta}(U)$. Moreover, by the moment-determinacy
theorem of \cite{Cojocea2026a}, the sequence
$\{M_r^{(G)}(X)\}_{r\ge1}$ determines the member $f_\eta$ uniquely.
\end{proposition}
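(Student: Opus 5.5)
The whole argument runs through Proposition~\ref{prop:pushforward}. Since $X\sim f_\eta$, the probability coordinate $U=G(X)$ has density $p_\eta$ on $(0,1)$. So every functional of $X$ that depends on $X$ only through $G(X)$ is the corresponding functional of $U\sim p_\eta$.

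For the first identity, apply the change of variables $u=G(x)$ to $\E[(G(X))^r]=\int_I G(x)^r p_\eta(G(x))g(x)\,dx$; equivalently, use that $U$ has law $p_\eta$. This gives $\int_0^1 u^r p_\eta(u)\,du$. Finiteness is automatic: $0<u^r<1$ and $p_\eta$ is a probability density, so the integral lies in $(0,1)$. It lies strictly inside because $p_\eta>0$ on $(0,1)$ and $u^r<1$ there. Since $G^{-1}$ is defined on all of $(0,1)$, the initial moment $M_r^{(G)}(X)$ in \eqref{eq:initial-moment} exists for every $r\ge1$ and equals the stated pullback.

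For the centred moments, note that $\bar u=\E[G(X)]=\E_{p_\eta}[U]$. Then $\E[(G(X)-\bar u)^r]=\E_{p_\eta}[(U-\bar u)^r]$, again by the pushforward identity, with boundedness guaranteeing existence. Taking $r=2$ gives $V_G(X)=\Var_{p_\eta}(U)$.

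For uniqueness, invoke the moment-determinacy theorem of \cite{Cojocea2026a}. Because $G^{-1}$ is injective, the sequence $\{M_r^{(G)}(X)\}$ determines the power moments $\E[U^r]$, $r\ge1$. These are moments of a law on the compact interval $[0,1]$, so by Hausdorff/Weierstrass they determine the law of $U$. Since $X=G^{-1}(U)$, they then determine the law of $X$, hence the density $f_\eta$ up to a.e.\ equality.

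The one point needing care is what "determines the member $f_\eta$ uniquely" means. It determines the density, and hence the parameter $\eta$ whenever the family is identifiable. Identifiability holds because $T$ is non-constant: the map $\eta\mapsto p_\eta$ is injective, since $A$ is strictly convex. I expect no real obstacle beyond stating this identifiability explicitly.
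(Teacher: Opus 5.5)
Your proof is correct and follows the paper's route: every claim is read off from the pushforward identity $U=G(X)\sim p_\eta$ of Proposition~\ref{prop:pushforward}, and determinacy is delegated to the moment-determinacy theorem of \cite{Cojocea2026a}. Your two extra steps fill in details the paper leaves implicit behind ``existence follows from boundedness of $U$'' and the bare citation: checking that $\E[U^r]$ lies strictly inside $(0,1)$, so that $G^{-1}$ can be applied, and unpacking determinacy through injectivity of $G^{-1}$ and the Hausdorff moment problem on $[0,1]$.
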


\begin{proof}
By Proposition~\ref{prop:pushforward}, $U=G(X)$ has density $p_\eta$, so
all probability-space moments of $X$ under the chart $G$ are classical
moments of $p_\eta$; existence follows from boundedness of $U$. The
pullback expression is \eqref{eq:initial-moment}. Determinacy is the
moment-determinacy theorem of \cite{Cojocea2026a} applied to the chart $G$
and the member law.
\end{proof}

The most useful specialization connects the barycenter of the family to
the convex duality of exponential families.

\begin{theorem}[Barycenters and moments as pulled-back mean parameters]
\label{thm:duality}
Fix $r\ge1$ and consider the $G$-exponential family with statistic
$T(u)=u^r$ and log-partition function $A_r$. Let $X_\eta\sim f_\eta$.
Then:
\begin{enumerate}[label=\textup{(\roman*)}]
\item $\E[(G(X_\eta))^r]=A_r'(\eta)$, and hence
\[
M_r^{(G)}(X_\eta)=G^{-1}\bigl(A_r'(\eta)\bigr);
\]
\item the map $\eta\mapsto M_r^{(G)}(X_\eta)$ is strictly increasing on
$N=\R$; in particular the $r$-th initial Kolmogorov moment identifies the
member within the family;
\item for $r=1$ (the canonical family of Example~\ref{ex:canonical}),
\[
b_G(X_\eta)=G^{-1}\bigl(A'(\eta)\bigr),
\qquad
V_G(X_\eta)=A''(\eta).
\]
\end{enumerate}
\end{theorem}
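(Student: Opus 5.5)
The plan is to reduce everything to the classical exponential family $p_\eta$ on $(0,1)$ through Proposition~\ref{prop:pushforward}, and then use the standard cumulant identities for the log-partition function. With $T(u)=u^r$, the statistic is bounded on $(0,1)$, so $N=\R$. Moreover, $A_r(\eta)=\log\int_0^1 e^{\eta u^r}\,du$ is finite and real-analytic on $\R$.

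\textbf{Step 1 (part (i)).} Since $0\le u^r\le 1$, the integrand $e^{\eta u^r}$ and its $\eta$-derivatives $u^{rk}e^{\eta u^r}$ are dominated on any compact $\eta$-interval. Differentiation under the integral sign is therefore justified, and it gives
\[
A_r'(\eta)=\int_0^1 u^r p_\eta(u)\,du,
\qquad
A_r''(\eta)=\int_0^1 u^{2r}p_\eta(u)\,du-\Bigl(\int_0^1 u^r p_\eta(u)\,du\Bigr)^2 .
\]
By Proposition~\ref{prop:moments}, the first integral equals $\E[(G(X_\eta))^r]$. Applying $G^{-1}$, which is well defined because the value lies in $(0,1)$, yields the formula for $M_r^{(G)}(X_\eta)$.

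\textbf{Step 2 (part (ii)).} The second identity shows that $A_r''(\eta)=\Var_{p_\eta}(U^r)$. This variance is strictly positive, because $p_\eta>0$ on $(0,1)$ and $u\mapsto u^r$ is non-constant there. Hence $A_r'$ is strictly increasing on $\R$. Since $G^{-1}:(0,1)\to I$ is strictly increasing, the composition $\eta\mapsto G^{-1}(A_r'(\eta))$ is strictly increasing, and therefore injective. This injectivity is the identification claim. The only point needing care is the positivity of the variance, which the argument above supplies.

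\textbf{Step 3 (part (iii)).} Specialize to $r=1$. Then $M_1^{(G)}=G^{-1}(\E[G(X)])=b_G(X)$, so part (i) gives $b_G(X_\eta)=G^{-1}(A'(\eta))$. Proposition~\ref{prop:moments} gives $V_G(X_\eta)=\Var_{p_\eta}(U)$, and by Step 1 with $r=1$ this equals $A''(\eta)$.

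For $\eta=0$ no separate argument is needed, since the formulas hold by continuity: $A'(0)=\tfrac12$ gives $b_G=G^{-1}(\tfrac12)$, the chart median, and $A''(0)=\tfrac1{12}$. Both values agree with Example~\ref{ex:canonical}. There is no real obstacle in this proof; the main technical step is justifying differentiation under the integral sign, which boundedness of $T$ makes routine.
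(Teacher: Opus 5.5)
Your proposal is correct and follows the paper's proof essentially step for step: it reduces to $p_\eta$ via the pushforward, reads off $A_r'=\E_{p_\eta}[U^r]$ and $A_r''=\Var_{p_\eta}(U^r)>0$, and composes with the strictly increasing $G^{-1}$. The only difference is that you justify differentiation under the integral sign directly from the boundedness of $u^r$, where the paper simply cites the classical theory.
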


\begin{proof}
(i) For a natural exponential family, differentiation under the integral
sign in
\[
A_r(\eta)=\log\int_0^1 e^{\eta u^r}\,du
\]
gives
$A_r'(\eta)=\E_{p_\eta}[U^r]$ \cite{Brown1986}; the pullback expression is
Proposition~\ref{prop:moments}.
(ii) $A_r''(\eta)=\Var_{p_\eta}(U^r)>0$ because $U^r$ is non-degenerate
under every $p_\eta$; thus $A_r'$ is strictly increasing, and so is
$G^{-1}\circ A_r'$ since $G^{-1}$ is strictly increasing.
(iii) is (i) with $r=1$ together with
$V_G(X_\eta)=\Var_{p_\eta}(U)=A''(\eta)$.
\end{proof}

\begin{remark}[Duality made geometric]
In classical exponential-family theory, the mean parameter $A'(\eta)$ is
the coordinate dual to the natural parameter $\eta$
\cite{BarndorffNielsen1978,Brown1986,Amari2016}. Theorem~\ref{thm:duality}
shows that in the transported family this dual coordinate is precisely the
probability-scale barycenter of \cite{Cojocea2026a}, and its pullback
$b_G(X_\eta)=G^{-1}(A'(\eta))$ is the probability barycenter of the
member. The Kolmogorov dispersion interval of \cite{Cojocea2026a},
$D_G(X_\eta)=[G^{-1}(A'(\eta)-s),\,G^{-1}(A'(\eta)+s)]$ with
$s=\sqrt{A''(\eta)}$, transports one standard deviation of the mean
parameter to value space. The barycentric machinery of the companion
papers is thus not merely compatible with $G$-exponential families: it
computes their canonical parameters.
\end{remark}

\section{Tail geometry: inheritance from the chart}\label{sec:tails}

One of the central claims of the framework is that heavy tails emerge from
the geometry of the probability chart rather than from any modification of
exponential structure. The following theorem makes the claim precise, and
its proof requires nothing beyond a comparison of survival functions; in
particular, no formal differentiation of asymptotic relations is needed.

Throughout this section the upper endpoint of $I$ is $b=+\infty$; the
lower endpoint is treated symmetrically.

\begin{theorem}[Tail equivalence with the chart]\label{thm:tail-equivalence}
Let $f_\eta$ be a member of a $G$-exponential family, and suppose that the
probability-space density has a finite positive boundary limit,
\[
c_\eta:=\lim_{u\to1^-}p_\eta(u)\in(0,\infty),
\]
which holds in particular whenever $T$ extends continuously to $u=1$.
Denote by $F_\eta$ the distribution function of $f_\eta$. Then, as
$x\to\infty$,
\[
\frac{f_\eta(x)}{g(x)}\longrightarrow c_\eta
\qquad\text{and}\qquad
\frac{1-F_\eta(x)}{1-G(x)}\longrightarrow c_\eta .
\]
Every member of the family is therefore tail-equivalent to the chart law.
\end{theorem}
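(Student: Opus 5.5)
The density statement is immediate from the definition \eqref{eq:gexp}. Since $f_\eta(x)=p_\eta(G(x))\,g(x)$ and $g>0$ on $I$, we have $f_\eta(x)/g(x)=p_\eta(G(x))$ exactly. Because $b=+\infty$ and $G(x)\to1^-$ as $x\to\infty$ (Definition~\ref{def:chart}), the hypothesis $p_\eta(u)\to c_\eta$ as $u\to1^-$ gives $f_\eta(x)/g(x)\to c_\eta$ by composition of limits. For the parenthetical claim, continuity of $T$ at $u=1$ gives $p_\eta(u)\to\exp(\eta T(1)-A(\eta))\in(0,\infty)$, so $c_\eta$ is indeed finite and positive.

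For the survival functions, I would avoid any L'H\^opital-type argument on densities and work directly with integrals in probability coordinates. The plan is to invoke Proposition~\ref{prop:pushforward}: $U=G(X)$ has density $p_\eta$ and $G$ is an increasing bijection, so
\[
1-F_\eta(x)=\Prob(U>G(x))=\int_{G(x)}^1 p_\eta(v)\,dv,
\qquad
1-G(x)=\int_{G(x)}^1 1\,dv .
\]
Writing $s=G(x)$, the ratio becomes the average of $p_\eta$ over $(s,1)$, namely $(1-s)^{-1}\int_s^1 p_\eta(v)\,dv$.

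The remaining step is the elementary averaging estimate. Given $\varepsilon>0$, choose $\delta>0$ such that $|p_\eta(v)-c_\eta|<\varepsilon$ for all $v\in(1-\delta,1)$. Then for $s>1-\delta$,
\[
\Bigl|\frac{1}{1-s}\int_s^1 p_\eta(v)\,dv-c_\eta\Bigr|\le\varepsilon .
\]
Since $G(x)\to1$ as $x\to\infty$, the survival ratio converges to $c_\eta$.

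The only point needing care is that $1-G(x)>0$ for every $x\in I$, which holds because $G$ is a bijection onto $(0,1)$; this keeps the ratio well defined. Tail equivalence then follows from the two limits, and with them, for instance, the regular-variation index of the chart tail is preserved. I expect no genuine obstacle here: the integral representation makes the second limit as direct as the first.
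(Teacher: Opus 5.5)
Your proof is correct and is essentially the paper's argument. The paper sandwiches $1-F_\eta(x)=\int_x^\infty p_\eta(G(t))\,g(t)\,dt$ between $(c_\eta\pm\varepsilon)(1-G(x))$ in value space. You instead perform the substitution $v=G(t)$ first, via Proposition~\ref{prop:pushforward}, and then run the same $\varepsilon$--$\delta$ averaging bound on $(G(x),1)$.
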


\begin{proof}
The first limit is immediate: $f_\eta(x)/g(x)=p_\eta(G(x))\to c_\eta$
since $G(x)\to1^-$. For the second, fix $\varepsilon\in(0,c_\eta)$ and
choose $\delta>0$ with $|p_\eta(u)-c_\eta|<\varepsilon$ for all
$u\in(1-\delta,1)$. For $x$ large enough that $G(x)>1-\delta$,
\[
1-F_\eta(x)=\int_x^\infty p_\eta(G(t))\,g(t)\,dt,
\]
and on the domain of integration $G(t)\ge G(x)>1-\delta$, so
\[
(c_\eta-\varepsilon)\int_x^\infty g(t)\,dt
\;\le\;
1-F_\eta(x)
\;\le\;
(c_\eta+\varepsilon)\int_x^\infty g(t)\,dt .
\]
Since $\int_x^\infty g=1-G(x)$, dividing by $1-G(x)$ and letting
$x\to\infty$, then $\varepsilon\to0$, gives the claim.
\end{proof}

\begin{corollary}[The tail index is a family invariant]\label{cor:rv}
If, in addition, $1-G$ is regularly varying at infinity with index
$-\beta<0$, then $1-F_\eta$ is regularly varying with the same index
$-\beta$, for every $\eta\in N$ satisfying the hypothesis of
Theorem~\ref{thm:tail-equivalence}. In particular, all members of the
family share the tail index of the chart.
\end{corollary}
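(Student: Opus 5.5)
The plan is to obtain the corollary directly from the survival-function limit in Theorem~\ref{thm:tail-equivalence}, together with the elementary fact that regular variation survives asymptotic equivalence. Fix $\eta\in N$ satisfying the hypothesis of that theorem, so that $c_\eta\in(0,\infty)$ and
\[
\frac{1-F_\eta(x)}{1-G(x)}\longrightarrow c_\eta\qquad (x\to\infty).
\]
We also use that $1-G(x)>0$ for every $x$, because $G<1$ on $I=(a,\infty)$, and that $1-F_\eta(x)>0$ as well, since $f_\eta=p_\eta(G)\,g>0$.

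To verify regular variation of $1-F_\eta$ with index $-\beta$, I would fix $\lambda>0$ and split the ratio into three factors:
\[
\frac{1-F_\eta(\lambda x)}{1-F_\eta(x)}
=\frac{1-F_\eta(\lambda x)}{1-G(\lambda x)}\cdot
\frac{1-G(\lambda x)}{1-G(x)}\cdot
\frac{1-G(x)}{1-F_\eta(x)} .
\]
As $x\to\infty$ we also have $\lambda x\to\infty$. The first factor therefore tends to $c_\eta$. The third factor tends to $1/c_\eta$, and this is exactly where positivity and finiteness of $c_\eta$ are needed. The middle factor tends to $\lambda^{-\beta}$ by the regular-variation hypothesis on $1-G$. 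Their product tends to $\lambda^{-\beta}$, which is the definition of regular variation with index $-\beta$.

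I would end by noting that $\eta$ was arbitrary among the parameters satisfying the hypothesis, so every such member shares the tail index of the chart. Equivalently, one can write $1-F_\eta(x)=(1-G(x))\,\ell_\eta(x)$ with $\ell_\eta(x)\to c_\eta$. Such an $\ell_\eta$ is trivially slowly varying, and the product of a regularly varying function of index $-\beta$ with a slowly varying function is regularly varying of the same index.

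There is essentially no obstacle here. The only point needing care is that $c_\eta\neq0$ and $c_\eta<\infty$. If $c_\eta=0$ the third factor would be undefined, and if $c_\eta=\infty$ the first would be; in either case the index could change. Both are excluded by the assumption $c_\eta\in(0,\infty)$ in Theorem~\ref{thm:tail-equivalence}.
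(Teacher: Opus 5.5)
Your proposal is correct and takes the paper's route: the paper's one-line proof cites the standard fact that tail equivalence with a constant in $(0,\infty)$ preserves regular variation and its index \cite{BinghamGoldieTeugels1987}. Your three-factor ratio argument applied to the survival-function limit of Theorem~\ref{thm:tail-equivalence} is just an explicit proof of that fact.
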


\begin{proof}
Tail equivalence with a positive constant preserves regular variation and
its index \cite{BinghamGoldieTeugels1987}.
\end{proof}

\begin{remark}[Tilting is tail-neutral in probability coordinates]
\label{rem:tail-neutral}
Corollary~\ref{cor:rv} is the exact opposite of the classical situation.
Classical exponential tilting modifies tail decay whenever it exists---and
for heavy-tailed laws it does not exist at all, since $e^{\eta x}$ is not
integrable against a regularly varying density for $\eta>0$. Tilting in
probability coordinates exists universally and changes only the tail
\emph{constant} $c_\eta$, never the tail \emph{index}. Heavy tails are
thereby exhibited as an attribute of the chart geometry, shared by the
entire family and unreachable by the algebra of the tilt: geometry decides
the decay, the exponential structure decides only how probability mass is
distributed relative to that geometry. This is the boundary-concentration
picture of heavy tails of \cite{Cojocea2026a}, now expressed at the level
of parametric families.
\end{remark}

\section{Statistical structure}\label{sec:inference}

We now develop estimation for $G$-exponential families in the format of
the companion inference paper \cite{Cojocea2026b}. Throughout this
section, the chart $G$ is fixed and known---a benchmark geometry in the
sense of \cite{Cojocea2026a,Cojocea2026b}---and $T$ is bounded, with
$\tau_-=\operatorname{ess\,inf}T$ and $\tau_+=\operatorname{ess\,sup}T$,
so that $N=\R$.

Let $X_1,\dots,X_n$ be independent with density $f_\eta$, and set
$U_i=G(X_i)$, so that $U_1,\dots,U_n$ are independent with density
$p_\eta$ by Proposition~\ref{prop:pushforward}.

\subsection{Sufficiency and maximum likelihood}

From \eqref{eq:gexp}, the log-likelihood is
\begin{equation}\label{eq:loglik}
\ell_n(\eta)
=\sum_{i=1}^n\log g(X_i)
+\eta\sum_{i=1}^n T(U_i)-nA(\eta),
\end{equation}
so by the factorization criterion $S_n=\sum_{i=1}^n T(G(X_i))$ is
sufficient for $\eta$. The summands $T(G(X_i))$ are \emph{bounded} random
variables, whatever the tail behaviour of the observations: sufficiency is
carried by a statistic that lives on the probability scale.

\begin{proposition}[Maximum likelihood as moment matching in probability
coordinates]\label{prop:mle}
The log-likelihood \eqref{eq:loglik} is strictly concave in $\eta$, and
the maximum likelihood estimator $\hat\eta_n$, when it exists, is the
unique solution of
\begin{equation}\label{eq:mle}
A'(\hat\eta_n)=\frac1n\sum_{i=1}^n T(G(X_i)).
\end{equation}
It exists with probability tending to one, is strongly consistent, and
satisfies
\[
\sqrt n\,(\hat\eta_n-\eta)
\;\xrightarrow{d}\;
\mathcal N\!\bigl(0,\;1/A''(\eta)\bigr),
\]
where the Fisher information is
$I(\eta)=A''(\eta)=\Var_{p_\eta}(T(U))$. For the canonical family
($T(u)=u$), the Fisher information equals the Kolmogorov variance of the
member: $I(\eta)=V_G(f_\eta)$.
\end{proposition}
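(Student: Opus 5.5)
The proof reduces everything to the probability scale. By Proposition~\ref{prop:pushforward}, the variables $U_i=G(X_i)$ are i.i.d.\ with density $p_\eta$. Moreover, in \eqref{eq:loglik} the term $\sum\log g(X_i)$ does not depend on $\eta$. The likelihood in $\eta$ is therefore exactly the likelihood of a one-parameter natural exponential family on $(0,1)$ with bounded sufficient statistic $T(U)$. The plan is to establish the properties of $A$ first and then import standard exponential-family asymptotics \cite{Brown1986}.

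\emph{Step 1: properties of $A$.} Since $T$ is bounded, $N=\R$ and $A$ is finite everywhere. Differentiation under the integral sign is justified by dominated convergence, because $|T|^k e^{\eta T}$ is bounded on compact $\eta$-sets. This gives
\[
A'(\eta)=\E_{p_\eta}[T(U)], \qquad A''(\eta)=\Var_{p_\eta}(T(U)).
\]
Since $T$ is non-constant on a set of positive Lebesgue measure and $p_\eta>0$, the variance is strictly positive. Hence $\ell_n''(\eta)=-nA''(\eta)<0$, and $\ell_n$ is strictly concave. The stationarity equation $\ell_n'(\eta)=0$ is exactly \eqref{eq:mle}, and strict concavity gives uniqueness whenever a maximizer exists.

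\emph{Step 2: existence.} I will show that $A'$ is a strictly increasing bijection from $\R$ onto $(\tau_-,\tau_+)$. For the limit $\eta\to+\infty$, the tilt $p_\eta$ concentrates its mass on the sets $\{T>\tau_+-\epsilon\}$, each of which has positive measure by the definition of the essential supremum; so $A'(\eta)\to\tau_+$. The case $\eta\to-\infty$ is symmetric. Consequently the MLE exists if and only if $\bar T_n=n^{-1}\sum T(U_i)$ lies in $(\tau_-,\tau_+)$. By the strong law of large numbers, $\bar T_n\to A'(\eta)$ almost surely, and $A'(\eta)$ is an interior point. Hence the MLE exists eventually almost surely, and in particular with probability tending to one.

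\emph{Step 3: consistency and asymptotic normality.} Write $\hat\eta_n=(A')^{-1}(\bar T_n)$. The inverse $(A')^{-1}$ is continuous, and in fact $C^1$ with derivative $1/A''$ by the inverse function theorem. Strong consistency then follows from the strong law of large numbers. For the limit distribution, the central limit theorem gives
\[
\sqrt n\,(\bar T_n-A'(\eta))\xrightarrow{d}\mathcal N\bigl(0,A''(\eta)\bigr),
\]
since $T(U)$ is bounded and has variance $A''(\eta)$. The delta method applied to $(A')^{-1}$ then yields asymptotic variance
\[
\frac{A''(\eta)}{A''(\eta)^2}=\frac{1}{A''(\eta)}.
\]
The Fisher information is $-\E[\ell_1'']=A''(\eta)$.

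\emph{Step 4: the canonical family.} For $T(u)=u$, Theorem~\ref{thm:duality}(iii) gives $A''(\eta)=V_G(X_\eta)$, which is the final identification $I(\eta)=V_G(f_\eta)$.

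The only genuinely delicate point is the boundary behaviour of $A'$ in Step 2. Its limits must be the essential bounds $\tau_\pm$, not the pointwise bounds of $T$; otherwise the characterization of existence fails. I would handle this by lower-bounding the ratio $\int_{\{T>\tau_+-\epsilon\}}e^{\eta T}\big/\int_{\{T\le\tau_+-2\epsilon\}}e^{\eta T}$ by a constant times $e^{\eta\epsilon}$, which tends to infinity as $\eta\to+\infty$.
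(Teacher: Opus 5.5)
Your proof is correct and takes the same route as the paper. Both reduce, via Proposition~\ref{prop:pushforward}, to an i.i.d.\ sample $U_1,\dots,U_n$ from the natural exponential family $p_\eta$, read \eqref{eq:mle} off the score equation, and identify $I(\eta)=V_G(f_\eta)$ through Theorem~\ref{thm:duality}(iii). The paper simply cites the classical full-exponential-family theory for concavity, existence, consistency and asymptotic normality, whereas you verify these steps directly. In particular, you correctly identify the range of $A'$ as $(\tau_-,\tau_+)$ using the essential bounds, which yields existence eventually almost surely.
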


\begin{proof}
Strict concavity, existence, consistency, and asymptotic normality are the
classical theory of full natural exponential families
\cite{Brown1986,vanderVaart1998}, applied to the sample $U_1,\dots,U_n$
from $p_\eta$; the reduction to this classical setting is exactly
Proposition~\ref{prop:pushforward}. Equation~\eqref{eq:mle} is the score
equation $\partial_\eta\ell_n=S_n-nA'(\eta)=0$. The identification
$I(\eta)=V_G(f_\eta)$ for $T(u)=u$ is Theorem~\ref{thm:duality}(iii).
\end{proof}

Equation~\eqref{eq:mle} transports the classical moment-matching property
of exponential families. The empirical mean of
the sufficient statistic is matched to its population value, but the
matching takes place in probability coordinates, where the statistic is
bounded. By Theorem~\ref{thm:duality}, for $T(u)=u^r$ the estimator can
equivalently be described as matching the empirical $r$-th coordinate moment
$\widehat\mu_{r,n}=n^{-1}\sum_i(G(X_i))^r$ of \cite{Cojocea2026b} to
$A_r'(\eta)$; the plug-in Kolmogorov moment
estimators of \cite{Cojocea2026b} are maximum likelihood within the
corresponding $G$-exponential family.

\subsection{Robustness from geometric compactification}

\begin{proposition}[Influence function and gross-error sensitivity]
\label{prop:IF}
The maximum likelihood functional of the $G$-exponential family, viewed as
a functional on distributions via \eqref{eq:mle}, has influence function
\[
\IF(x;\hat\eta,f_\eta)
=\frac{T(G(x))-A'(\eta)}{A''(\eta)},
\qquad x\in I,
\]
which is bounded, with gross-error sensitivity
\[
\gamma^*(\hat\eta,f_\eta)
=\frac{\max\bigl(\tau_+-A'(\eta),\;A'(\eta)-\tau_-\bigr)}{A''(\eta)}
<\infty .
\]
For the canonical family, $\tau_-=0$ and $\tau_+=1$, so
$\gamma^*=\max\bigl(A'(\eta),\,1-A'(\eta)\bigr)/A''(\eta)$.
\end{proposition}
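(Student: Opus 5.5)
The plan is to treat the maximum likelihood estimator as the M-functional (equivalently, Z-functional) defined by the population version of \eqref{eq:mle}. For a distribution $F$ on $I$, let $\eta(F)$ be the solution of
\[
\int_I \psi\bigl(x,\eta(F)\bigr)\,dF(x)=0,
\qquad
\psi(x,\eta)=T(G(x))-A'(\eta).
\]
This functional is well defined near $f_\eta$. Its integrand is bounded in $x$ because $T$ is bounded. Moreover, $\eta\mapsto\int\psi\,dF$ is strictly decreasing, since $A''>0$ by Theorem~\ref{thm:duality}(ii), or in general because $A''(\eta)=\Var_{p_\eta}(T(U))>0$ for non-constant $T$. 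Fisher consistency, $\eta(F_\eta)=\eta$, follows from Proposition~\ref{prop:pushforward}: $\E_{f_\eta}[T(G(X))]=\E_{p_\eta}[T(U)]=A'(\eta)$.

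Next, I would compute the influence function directly along the contamination path $F_t=(1-t)F_\eta+t\delta_x$. Write $\eta_t=\eta(F_t)$. The defining equation becomes
\[
(1-t)\,\E_{F_\eta}[T(G(X))]+t\,T(G(x))=A'(\eta_t).
\]
Since $\E_{F_\eta}[T(G(X))]=A'(\eta)$, this gives $A'(\eta_t)=A'(\eta)+t\bigl(T(G(x))-A'(\eta)\bigr)$ exactly. This is the one point needing care. For small $t$ the right-hand side lies in the range of $A'$, which is the open interval $(\tau_-,\tau_+)$ for a full family with bounded $T$. Because $A'$ is a $C^\infty$ strictly increasing map with $A''(\eta)>0$, the inverse function theorem gives $\eta_t=(A')^{-1}(\cdot)$, differentiable at $t=0$. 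Differentiating at $t=0$ yields $A''(\eta)\,\dot\eta_0=T(G(x))-A'(\eta)$, which is the stated formula. Alternatively, one can cite the general M-estimator formula $\IF=-\psi/\E[\partial_\eta\psi]$ with $\partial_\eta\psi=-A''(\eta)$, as in the framework of \cite{Cojocea2026b}.

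Boundedness and the gross-error sensitivity then follow from the range of $x\mapsto T(G(x))$. Since $G$ is a bijection onto $(0,1)$, the set of values $\{T(G(x))\}$ is exactly $T((0,1))$. The supremum of $|T(u)-A'(\eta)|$ over this set is $\max(\tau_+-A'(\eta),A'(\eta)-\tau_-)$, and it is finite because $T$ is bounded. I would add a remark on one subtlety: $\tau_\pm$ are essential bounds while $\gamma^*$ is a supremum over all $x$. These agree when $T$ is continuous, for instance in the canonical case, and otherwise $T$ may be modified on a null set, which leaves $f_\eta$ unchanged. Dividing by $A''(\eta)>0$ gives $\gamma^*$. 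For $T(u)=u$ we have $\tau_-=0$ and $\tau_+=1$, and $A'(\eta)\in(0,1)$ by \eqref{eq:canonical-A}, which gives the canonical formula.

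The main obstacle I expect is justifying differentiability of the functional along the contamination path, together with $A'$ being smooth with positive derivative on all of $N=\R$. Both come from standard exponential-family theory \cite{Brown1986}, as already used in Theorem~\ref{thm:duality}, so the difficulty is small.
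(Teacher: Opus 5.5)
Your proposal is correct and follows essentially the paper's route. Like the paper, you treat $\hat\eta$ as the M-functional with score $\psi(x,\eta)=T(G(x))-A'(\eta)$, obtain $\IF=\psi/A''(\eta)$, and read boundedness and $\gamma^*$ off the range of $T$. Your explicit differentiation of the exact contaminated equation $A'(\eta_t)=(1-t)A'(\eta)+t\,T(G(x))$ makes self-contained the M-estimator formula the paper only cites. Your remark on the bounds also tightens the paper's phrase ``attained at the essential bounds of $T$'': one must choose a version of $T$ whose pointwise bounds equal $\tau_\pm$, and even then the supremum is in general approached rather than attained.
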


\begin{proof}
The functional is an M-functional with score
$\psi(x,\eta)=T(G(x))-A'(\eta)$; the influence function of an M-functional
is $\psi/(-\E[\partial_\eta\psi])=\psi/A''(\eta)$
\cite{HampelEtAl1986,Huber2009}. Boundedness follows from
$\tau_-\le T\le\tau_+$, and the supremum of $|\IF|$ is attained at the
essential bounds of $T$.
\end{proof}

\begin{remark}[Robustness by construction, with an honest caveat]
\label{rem:robustness}
In classical exponential families, the maximum likelihood estimator has
influence function proportional to $T(x)-A'(\eta)$ and is therefore
\emph{unbounded} whenever the sufficient statistic is unbounded---the
Gaussian mean being the canonical example. In $G$-exponential families the
sufficient statistic is bounded by construction, so maximum likelihood
itself is a bounded-influence procedure: no truncation, reweighting, or
Huberization is imposed, and full likelihood efficiency at the model is
retained. This is robustness from geometric compactification, the central
mechanism of \cite{Cojocea2026b}, here appearing not in a modified
estimator but in the model class itself.

The honest caveat, in the accounting of \cite{Cojocea2026b}, is that the
chart is fixed and known. The family \eqref{eq:gexp} is anchored at $G$:
it is not closed under location--scale changes of the data, and the
parameter $\eta$ measures position relative to the benchmark geometry, not
an equivariant location. When equivariance is required, the chart must be
calibrated from the data, and the two-step estimated-chart theory of
\cite{Cojocea2026b}, including its first-order calibration correction,
becomes the relevant framework; its extension from barycenters to full
$G$-exponential likelihoods is left for future work.
\end{remark}

\section{Entropy and information}\label{sec:entropy}

Two results complete the structural picture: a variational
characterization of the transported families, and the invariance of
information quantities under the transport.

For densities $f_1,f_2$ on the same space, write
$\KL(f_1\|f_2)=\int f_1\log(f_1/f_2)$ for the relative entropy
\cite{CoverThomas2006}.

\begin{proposition}[Minimum relative entropy to the chart law]
\label{prop:maxent}
Let $m$ lie in the interior of the range of $A'$, and let $\eta$ be the
unique solution of $A'(\eta)=m$. Then, among all probability densities $f$
on $I$ satisfying the probability-coordinate moment constraint
\[
\E_f\bigl[T(G(X))\bigr]=m,
\]
the member $f_\eta$ of the $G$-exponential family uniquely minimizes
$\KL(f\,\|\,g)$ over this class. Moreover, the Pythagorean identity
\[
\KL(f\,\|\,g)=\KL(f\,\|\,f_\eta)+\KL(f_\eta\,\|\,g)
\]
holds for every such $f$.
\end{proposition}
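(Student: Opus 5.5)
The plan is to prove the Pythagorean identity first and read off minimality and uniqueness from it. The key observation is that the log-likelihood ratio of $f_\eta$ to $g$ is affine in the bounded transported statistic. By \eqref{eq:gexp},
\[
\log\frac{f_\eta(x)}{g(x)}=\eta\,T(G(x))-A(\eta),
\]
which is bounded on $I$ because $T$ is bounded. (If $T$ is merely in the setting of Definition~\ref{def:expfam01}, I would add the hypothesis that $T(G(X))$ is $f$-integrable, which the constraint already presupposes.) Consequently, for any density $f$ satisfying $\E_f[T(G(X))]=m$,
\[
\int f\log\frac{f_\eta}{g}=\eta m-A(\eta)=\int f_\eta\log\frac{f_\eta}{g}=\KL(f_\eta\|g).
\]
The middle equality uses $\E_{f_\eta}[T(G(X))]=A'(\eta)=m$, which follows from Proposition~\ref{prop:pushforward} together with the standard identity $A'(\eta)=\E_{p_\eta}[T(U)]$, exactly as in the proof of Theorem~\ref{thm:duality}(i).

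Next I split the logarithm. On the set where $f>0$ we have $\log(f/g)=\log(f/f_\eta)+\log(f_\eta/g)$. Integrating against $f$ gives
\[
\KL(f\|g)=\KL(f\|f_\eta)+\eta m-A(\eta)=\KL(f\|f_\eta)+\KL(f_\eta\|g),
\]
which is the Pythagorean identity. The main technical obstacle is justifying this splitting when $\KL(f\|g)=+\infty$, or when integrability is in doubt. I would handle it as follows. The second term $\log(f_\eta/g)$ is bounded, hence $f$-integrable. The integral of $f\log(f/g)$ is well defined in $[0,\infty]$, since its negative part is controlled by the usual inequality $f\log(f/g)\ge f-g$. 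Subtracting the $f$-integrable bounded function $f\log(f_\eta/g)$ therefore preserves finiteness or infiniteness. The identity then holds in $[0,\infty]$, with both sides infinite simultaneously.

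Finally, I use Gibbs' inequality, $\KL(f\|f_\eta)\ge0$ with equality iff $f=f_\eta$ almost everywhere. Combined with the identity, this gives $\KL(f\|g)\ge\KL(f_\eta\|g)$, with equality exactly when $f=f_\eta$; this is the unique minimization. The member $f_\eta$ itself belongs to the constraint class by the moment computation above. The existence and uniqueness of $\eta$ with $A'(\eta)=m$ follow from the strict monotonicity of $A'$, since $A''=\Var_{p_\eta}(T(U))>0$ for non-constant $T$, together with $m$ lying in the interior of the range of $A'$.

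As a remark, the same argument could be carried out on $(0,1)$ via the pushforward $U=G(X)$. However, the value-space computation above avoids needing invariance of KL under the bijection $G$, which is established later in this section.
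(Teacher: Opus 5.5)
Your proof is correct and follows the paper's argument: you split $\log(f/g)$ through $f_\eta$ using the affine log-ratio $\log(f_\eta/g)=\eta\,T(G(x))-A(\eta)$, evaluate its $f$- and $f_\eta$-expectations via the constraint and $A'(\eta)=m$, and conclude with Gibbs' inequality. Your handling of the case $\KL(f\|g)=\infty$ is slightly more careful than the paper's, which only dismisses that case for the minimization claim: you use the $f$-integrability of $\log(f_\eta/g)$ to show that both sides of the Pythagorean identity are infinite together.
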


\begin{proof}
Let $f$ satisfy the constraint. If $\KL(f\|g)=\infty$ the inequality is
trivial, so assume it finite. Using
$\log(f/f_\eta)=\log(f/g)-\eta T(G(x))+A(\eta)$,
\[
\KL(f\,\|\,f_\eta)
=\KL(f\,\|\,g)-\eta\,\E_f[T(G(X))]+A(\eta)
=\KL(f\,\|\,g)-\eta m+A(\eta).
\]
For $f=f_\eta$, direct differentiation of $A$ gives
\[
\E_{f_\eta}[T(G(X))]=A'(\eta)=m.
\]
The same computation therefore gives
$\KL(f_\eta\|g)=\eta m-A(\eta)$. Adding the two displays yields the
Pythagorean identity, and since $\KL(f\|f_\eta)\ge0$ with equality iff
$f=f_\eta$ a.e., the minimization claim follows.
\end{proof}

\begin{remark}
Proposition~\ref{prop:maxent} is the transported analogue of the classical
maximum-entropy characterization of exponential families
\cite{CoverThomas2006}: the constraint is a moment constraint in
probability coordinates, and the reference law is the chart law. It
upgrades the informal statement that ``entropy maximization in probability
coordinates'' should generate the transported families---listed as an open
direction in earlier drafts of this circle of ideas---to a theorem.
\end{remark}

\begin{proposition}[Information invariance under transport]
\label{prop:invariance}
For all $\eta,\eta'\in N$,
\[
\KL\bigl(f_\eta\,\|\,f_{\eta'}\bigr)
=\KL\bigl(p_\eta\,\|\,p_{\eta'}\bigr),
\]
and the Fisher information functions of the two families coincide:
$I_f(\eta)=I_p(\eta)=A''(\eta)$.
\end{proposition}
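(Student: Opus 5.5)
The plan is to reduce both claims to the change of variables $u=G(x)$ supplied by Proposition~\ref{prop:pushforward}. The key observation is that the chart density cancels in every likelihood ratio. From \eqref{eq:gexp}, for every $x\in I$,
\[
\frac{f_\eta(x)}{f_{\eta'}(x)}=\frac{p_\eta(G(x))\,g(x)}{p_{\eta'}(G(x))\,g(x)}=\frac{p_\eta(G(x))}{p_{\eta'}(G(x))},
\]
and this step uses $g>0$ on $I$. Hence $\log(f_\eta/f_{\eta'})(x)=\phi(G(x))$, where $\phi=\log(p_\eta/p_{\eta'})=(\eta-\eta')T-A(\eta)+A(\eta')$.

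For the relative entropy, I will write $\KL(f_\eta\|f_{\eta'})=\E_{f_\eta}[\phi(G(X))]$. Proposition~\ref{prop:pushforward} says $U=G(X)$ has law $p_\eta$, so this equals $\E_{p_\eta}[\phi(U)]=\KL(p_\eta\|p_{\eta'})$. Alternatively, one can substitute $v=G(t)$ in the integral, exactly as in that proposition.

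The only delicate point is integrability. The integrand $\phi$ need not have constant sign. So I will argue via the integral of $f_\eta\log(f_\eta/f_{\eta'})$ in the extended sense, splitting it into positive and negative parts. The substitution identity holds separately for each part, and each part is nonnegative. The two sides are therefore equal as elements of $[0,\infty]$. In particular, when $T$ is bounded both sides are finite and equal to $(\eta-\eta')A'(\eta)-A(\eta)+A(\eta')$, using $\E_{p_\eta}[T(U)]=A'(\eta)$. I expect this integrability bookkeeping for unbounded $T$ to be the only real obstacle, and the positive/negative-part argument handles it.

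For the Fisher information, the score of the transported family is $\partial_\eta\log f_\eta(x)=T(G(x))-A'(\eta)$, by Remark~\ref{rem:transport-log}; the carrier $\log g(x)$ does not depend on $\eta$. The score of $p_\eta$ is $T(u)-A'(\eta)$. Applying the pushforward once more,
\[
I_f(\eta)=\E_{f_\eta}\bigl[(T(G(X))-A'(\eta))^2\bigr]=\E_{p_\eta}\bigl[(T(U)-A'(\eta))^2\bigr]=I_p(\eta)=\Var_{p_\eta}(T(U)).
\]
This last quantity equals $A''(\eta)$ by the standard differentiation under the integral sign for natural exponential families \cite{Brown1986}, already used in Theorem~\ref{thm:duality} and Proposition~\ref{prop:mle}. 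That differentiation is valid at interior points of $N$, which covers all of $N=\R$ when $T$ is bounded.

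An alternative route to the Fisher information is to take second-order expansions of the KL identity in $\eta'$ around $\eta$. The score computation is more direct, so I will use that one.
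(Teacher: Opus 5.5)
Your proposal is correct and follows essentially the same route as the paper. The chart density cancels in $f_\eta/f_{\eta'}$, the relative entropy transfers by the substitution $u=G(x)$ (equivalently Proposition~\ref{prop:pushforward}), and the transported score $T(G(x))-A'(\eta)$ is the score of $p_\eta$ composed with $G$, so its second moment is preserved. Your positive/negative-part bookkeeping for possibly infinite relative entropy, and your restriction of $\Var_{p_\eta}(T(U))=A''(\eta)$ to interior points of $N$, are careful refinements that the paper leaves implicit (the negative part is in fact always integrable, since $f\log(f/h)\ge f-h$ pointwise).
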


\begin{proof}
In the ratio $f_\eta/f_{\eta'}=p_\eta(G(x))/p_{\eta'}(G(x))$ the chart
density cancels, so
\[
\KL(f_\eta\|f_{\eta'})
=\int_I p_\eta(G(x))\,g(x)\,
\log\frac{p_\eta(G(x))}{p_{\eta'}(G(x))}\,dx
=\int_0^1 p_\eta(u)\log\frac{p_\eta(u)}{p_{\eta'}(u)}\,du,
\]
by the substitution $u=G(x)$. The score
$\partial_\eta\log f_\eta(x)=T(G(x))-A'(\eta)$ is the score of $p_\eta$
composed with $G$, and its second moment under $f_\eta$ equals its second
moment under $p_\eta$ by Proposition~\ref{prop:pushforward}.
\end{proof}

\begin{remark}[What the chart does and does not deform]
\label{rem:invariance}
Proposition~\ref{prop:invariance} is an honest delimitation of the
construction, in the spirit of the degeneracy accounting of
\cite{Cojocea2026b}. The transport changes the geometry of value space
decisively---tails, support, barycenters, quantiles---but it changes
nothing about the statistical distinguishability structure of the family:
relative entropies, Fisher information, and hence the information geometry
in the sense of \cite{Amari2016} are those of the classical family on
$(0,1)$, transported isometrically. The framework does not manufacture new
information; it relocates classical exponential information into a
geometry where it coexists with heavy tails and bounded influence.
\end{remark}

\section{Worked example: the Cauchy chart}\label{sec:example}

Let $G$ be the standard Cauchy chart on $I=\R$,
\[
G(x)=\frac12+\frac1\pi\arctan x,
\qquad
g(x)=\frac1{\pi(1+x^2)},
\qquad
G^{-1}(u)=\tan\bigl(\pi(u-\tfrac12)\bigr),
\]
and consider the canonical family of Example~\ref{ex:canonical}, with
$T(u)=u$ and $A$, $A'$, $A''$ given by \eqref{eq:canonical-A}. The
transported densities are
\[
f_\eta(x)=\frac{\eta\,e^{\eta G(x)}}{e^\eta-1}\cdot\frac1{\pi(1+x^2)},
\qquad\eta\in\R,
\]
with $f_0=g$ the standard Cauchy density.
Figure~\ref{fig:cauchy-family} displays the family in both coordinate
systems: in probability coordinates the members are truncated exponential
densities on $(0,1)$; in value space they are asymmetric, heavy-tailed
deformations of the Cauchy law.

\begin{figure}[t]
\centering
\includegraphics[width=\textwidth]{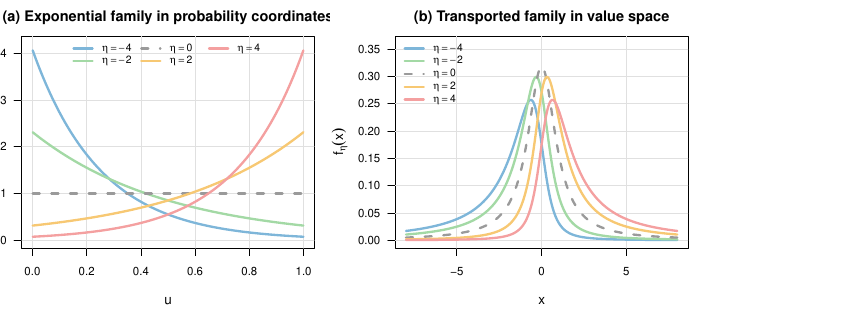}
\caption{The canonical $G$-exponential family for the standard Cauchy
chart, for $\eta\in\{-4,-2,0,2,4\}$. Panel (a): the exponential family
$p_\eta(u)=\eta e^{\eta u}/(e^\eta-1)$ in probability coordinates; the
dashed grey member is the uniform density ($\eta=0$). Panel (b): the
transported densities $f_\eta(x)=p_\eta(G(x))g(x)$ in value space; the
dashed grey member is the standard Cauchy density, and positive $\eta$
tilts probability mass towards the upper tail without altering the tail
index. All curves are exact.}
\label{fig:cauchy-family}
\end{figure}

\paragraph{Tails.}
Since $1-G(x)\sim1/(\pi x)$ and $G(x)\sim1/(\pi|x|)$ as $x\to+\infty$ and
$x\to-\infty$ respectively, and since
\[
c_\eta^+:=p_\eta(1^-)=\frac{\eta}{1-e^{-\eta}},
\qquad
c_\eta^-:=p_\eta(0^+)=\frac{\eta}{e^\eta-1},
\]
Theorem~\ref{thm:tail-equivalence} gives, as $x\to\infty$,
\[
1-F_\eta(x)\sim\frac{c_\eta^+}{\pi x},
\qquad
F_\eta(-x)\sim\frac{c_\eta^-}{\pi x}.
\]
Every member has exact Cauchy-type tails of index $1$
(Corollary~\ref{cor:rv}); the tilt redistributes tail weight between the
two tails through the ratio $c_\eta^+/c_\eta^-=e^\eta$ but cannot change
the decay. For $\eta=2$, for instance, $c_2^+\approx2.3130$ and
$c_2^-\approx0.3130$: the upper tail is $e^2\approx7.39$ times heavier
than the lower, yet both remain of Cauchy type.

\paragraph{Barycenters and medians.}
The probability barycenter of $f_\eta$ is
\[
b_G(f_\eta)=\tan\Bigl(\pi\bigl(A'(\eta)-\tfrac12\bigr)\Bigr),
\]
the pullback of the mean parameter in Theorem~\ref{thm:duality}(iii). The
median of $f_\eta$ is the pullback
$\operatorname{med}(f_\eta)=G^{-1}(u_\eta)$ of the median level
$u_\eta=\eta^{-1}\log\bigl((e^\eta+1)/2\bigr)$ of $p_\eta$. For $\eta=2$:
$A'(2)\approx0.6565$, $b_G(f_2)\approx0.536$, while $u_2\approx0.7169$
and $\operatorname{med}(f_2)\approx0.811$; for $\eta=4$:
$b_G(f_4)\approx1.125$ against $\operatorname{med}(f_4)\approx1.706$. The
Kolmogorov variance decreases from $A''(0)=1/12\approx0.0833$ to
$A''(2)\approx0.0690$ and $A''(4)\approx0.0435$ as the mass concentrates
near the boundary. Figure~\ref{fig:barycenter} traces both location
functionals across the family: the barycenter is systematically closer to
the anchor of the chart than the member median, a quantitative
illustration of the anchoring phenomenon analysed in
\cite{Cojocea2026b}---the fixed benchmark geometry reads locations
relative to its own centre, and shrinks them towards it.

\begin{figure}[t]
\centering
\includegraphics[width=0.72\textwidth]{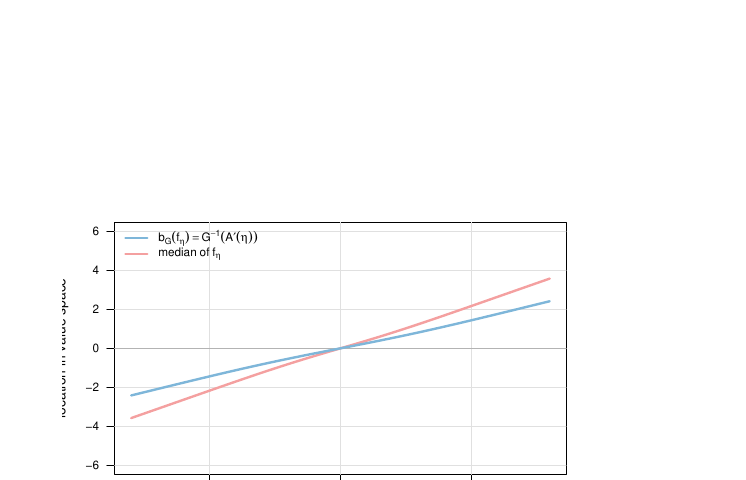}
\caption{Location functionals of the Cauchy-chart family as functions of
the natural parameter $\eta$. The probability barycenter
$b_G(f_\eta)=G^{-1}(A'(\eta))$ (blue) is the pullback of the mean
parameter of the exponential family in probability coordinates
(Theorem~\ref{thm:duality}); the member median (rose) is the pullback of
the median level of $p_\eta$. The barycenter is drawn towards the anchor
of the fixed chart, illustrating the non-equivariance analysed in the
companion inference paper. All curves are exact.}
\label{fig:barycenter}
\end{figure}

\paragraph{Inference.}
Given observations $X_1,\dots,X_n$ from $f_\eta$, the sufficient statistic
is $\bar S_n=\frac1n\sum_iG(X_i)=\frac1n\sum_i(\frac12+\frac1\pi\arctan
X_i)$, a bounded average, and the maximum likelihood estimator solves
$A'(\hat\eta_n)=\bar S_n$, a strictly monotone scalar equation. By
Proposition~\ref{prop:IF} with $A'(2)\approx0.6565$ and
$A''(2)\approx0.0690$, the gross-error sensitivity at $\eta=2$ is
$\gamma^*$ is finite, with
\[
\gamma^*=\frac{\max(0.6565,\,0.3435)}{0.0690}\approx9.52,
\]
in a model
whose every member has infinite mean. Maximum likelihood in this family is
a bounded-influence procedure on Cauchy-tailed data---classically a
setting where raw-moment inference is unavailable.

\section{Simulation study}\label{sec:simulation}

We corroborate the statistical theory of Section~\ref{sec:inference} with a
Monte Carlo study on the Cauchy-chart canonical family of
Section~\ref{sec:example}, a deliberately severe test case: every member
has infinite mean, so the ordinary sample mean cannot estimate a finite
location target consistently, yet the transported likelihood has bounded
influence by construction. All experiments use $B=2000$ independent
replications per configuration and the seed \texttt{2026}.

\paragraph{Design.}
Sampling from a member is exact, with no rejection step: by the pushforward
characterization (Proposition~\ref{prop:pushforward}), if
$V\sim\Unif(0,1)$ then $U=P_\eta^{-1}(V)$ has density $p_\eta$ and
$X=G^{-1}(U)$ has density $f_\eta$, where
$P_\eta^{-1}(v)=\eta^{-1}\log\bigl(1+v(e^\eta-1)\bigr)$ is the quantile
function of the truncated exponential $p_\eta$ and
$G^{-1}(u)=\tan(\pi(u-\tfrac12))$ is the Cauchy inverse chart. We compare
two feasible estimators of the natural parameter $\eta$: the maximum
likelihood estimator $\hat\eta_n$ solving the monotone score equation
$A'(\hat\eta_n)=\tfrac1n\sum_iG(X_i)$ of Proposition~\ref{prop:mle}, and a
robust competitor, the \emph{median-pullback estimator} $\tilde\eta_n$
solving $u_{\mathrm{med}}(\tilde\eta_n)=G(\operatorname{med}\{X_i\})$, where
$u_{\mathrm{med}}(\eta)=\eta^{-1}\log\bigl((e^\eta+1)/2\bigr)$ is the median
level $u_\eta$ of $p_\eta$ from Section~\ref{sec:example}; the estimator
reads $\eta$ off the sample median rather than the mean of the bounded
coordinates. Both are anchored at the fixed Cauchy chart, in the sense of
Remark~\ref{rem:robustness}.

\paragraph{Population efficiency.}
The maximum likelihood estimator attains the Fisher information
$I(\eta)=A''(\eta)$ (Proposition~\ref{prop:mle}), while the delta method
applied to the sample median gives, for the median-pullback estimator,
asymptotic variance
$1/\bigl(4\,p_\eta(u_{\mathrm{med}}(\eta))^2\,u_{\mathrm{med}}'(\eta)^2\bigr)$.
The resulting asymptotic relative efficiencies are
\[
\mathrm{ARE}(\tilde\eta:\hat\eta)=
\begin{cases}
0.750, & \eta=0,\\
0.671, & \eta=2,\\
0.562, & \eta=4,
\end{cases}
\]
so the likelihood estimator is the more efficient of the two throughout,
increasingly so as the tilt concentrates mass near the boundary of
probability space. At $\eta=0$ (the standard Cauchy law itself) the value
$\tfrac34$ is exact: $A''(0)=\tfrac1{12}$ is the variance of the uniform
coordinate, and $u_{\mathrm{med}}'(0)=\tfrac18$.

\paragraph{Consistency, efficiency, and coverage.}
Table~\ref{tab:consistency} reports bias, standard deviation, root mean
squared error, and the empirical coverage of the nominal $95\%$ Wald
interval $\hat\eta_n\pm z_{0.975}/\sqrt{n\,A''(\hat\eta_n)}$ built from the
plug-in Fisher information. Both estimators are essentially unbiased
already at $n=50$; the RMSE ratios track the population efficiencies above,
the maximum likelihood estimator dominating at every configuration; and the
Wald coverage sits at the nominal level across all values of $\eta$ and
$n$, confirming that the plug-in information of
Proposition~\ref{prop:mle} yields calibrated inference on data with no
finite mean. Figure~\ref{fig:sim-rmse} displays the same information in
scale-free form: $\sqrt n\times\text{RMSE}$ is flat in $n$ and settles on
the asymptotic standard deviations $\sqrt{A''(\eta)^{-1}}$ (dashed), the
signature of $\sqrt n$-consistency at the efficient rate.

\begin{table}[t]
\centering
\small
\begin{tabular}{cc rrrc rrr}
\toprule
& & \multicolumn{4}{c}{MLE $\hat\eta_n$}
  & \multicolumn{3}{c}{median-pullback $\tilde\eta_n$}\\
\cmidrule(lr){3-6}\cmidrule(lr){7-9}
$\eta$ & $n$ & bias & SD & RMSE & cover & bias & SD & RMSE\\
\midrule
% --- Monte Carlo values (B = 2000, seed 2026) ---
$0$ & 50 & 0.0140 & 0.4970 & 0.4970 & 0.952 & 0.0114 & 0.5755 & 0.5755 \\
 & 200 & -0.0029 & 0.2483 & 0.2482 & 0.945 & -0.0064 & 0.2825 & 0.2825 \\
 & 1000 & 0.0012 & 0.1127 & 0.1126 & 0.946 & -0.0008 & 0.1273 & 0.1273 \\
 & 5000 & -0.0027 & 0.0476 & 0.0476 & 0.956 & -0.0034 & 0.0547 & 0.0548 \\
\addlinespace
$2$ & 50 & 0.0269 & 0.5498 & 0.5504 & 0.953 & 0.0518 & 0.6810 & 0.6828 \\
 & 200 & 0.0038 & 0.2740 & 0.2739 & 0.947 & 0.0025 & 0.3311 & 0.3311 \\
 & 1000 & -0.0013 & 0.1180 & 0.1180 & 0.954 & -0.0039 & 0.1433 & 0.1433 \\
 & 5000 & 0.0004 & 0.0551 & 0.0551 & 0.945 & 0.0003 & 0.0677 & 0.0677 \\
\addlinespace
$4$ & 50 & 0.0536 & 0.7008 & 0.7027 & 0.952 & 0.1015 & 0.9554 & 0.9605 \\
 & 200 & 0.0197 & 0.3469 & 0.3474 & 0.951 & 0.0260 & 0.4568 & 0.4575 \\
 & 1000 & 0.0056 & 0.1511 & 0.1512 & 0.948 & 0.0107 & 0.2011 & 0.2013 \\
 & 5000 & -0.0003 & 0.0679 & 0.0679 & 0.951 & 0.0009 & 0.0894 & 0.0894 \\
\bottomrule
\end{tabular}
\caption{Finite-sample behaviour of the two estimators of the natural
parameter $\eta$ for the Cauchy-chart canonical family
($B=2000$ replications, seed \texttt{2026}). ``cover'' is the empirical
coverage of the nominal $95\%$ Wald interval based on the plug-in Fisher
information $A''(\hat\eta_n)$. Every member of this family has infinite
mean, so the sample mean of the raw data is not a consistent estimator of
any location parameter; the estimators here operate on the bounded
coordinates $G(X_i)$.}
\label{tab:consistency}
\end{table}

\begin{figure}[t]
\centering
\makebox[\textwidth]{%
  \hfill (a) $\eta=0$\hfill
  (b) $\eta=2$\hfill
  (c) $\eta=4$\hfill}
\vspace{-0.5em}
\includegraphics[width=\textwidth]{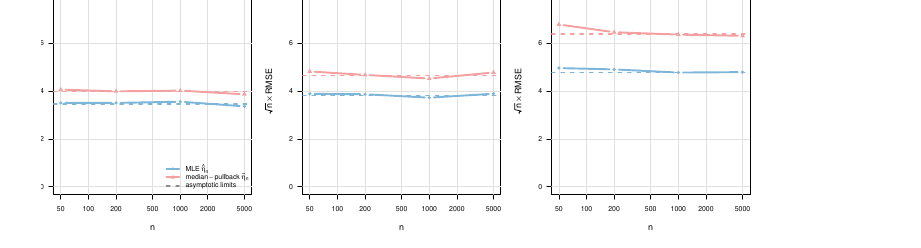}
\caption{Scale-free root mean squared error $\sqrt n\times\text{RMSE}$ of
the maximum likelihood estimator (blue) and the median-pullback estimator
(rose) as a function of $n$ (log axis), for $\eta\in\{0,2,4\}$; $B=2000$
replications per point. Dashed lines mark the asymptotic standard
deviations $\sqrt{A''(\eta)^{-1}}$ and
$\bigl(4p_\eta(u_{\mathrm{med}})^2u_{\mathrm{med}}'^2\bigr)^{-1/2}$. The
maximum likelihood estimator is uniformly the more efficient, and both
curves are flat, confirming $\sqrt n$-consistency.}
\label{fig:sim-rmse}
\end{figure}

\paragraph{Asymptotic normality.}
Figure~\ref{fig:sim-normality} overlays the standard normal density on the
histogram of the studentized statistic
$\sqrt{n\,A''(\hat\eta_n)}\,(\hat\eta_n-\eta)$ at $\eta=2$, for $n=50$ and
$n=1000$. The match is already good at $n=50$ (Monte Carlo mean $0.003$,
standard deviation $0.987$) and excellent at $n=1000$ (mean $-0.022$,
standard deviation $1.018$), illustrating
Proposition~\ref{prop:mle} on Cauchy-tailed data. The studentization uses
only the bounded coordinates, so nothing in the procedure is sensitive to
the tails of the raw sample.

\begin{figure}[t]
\centering
\includegraphics[width=\textwidth]{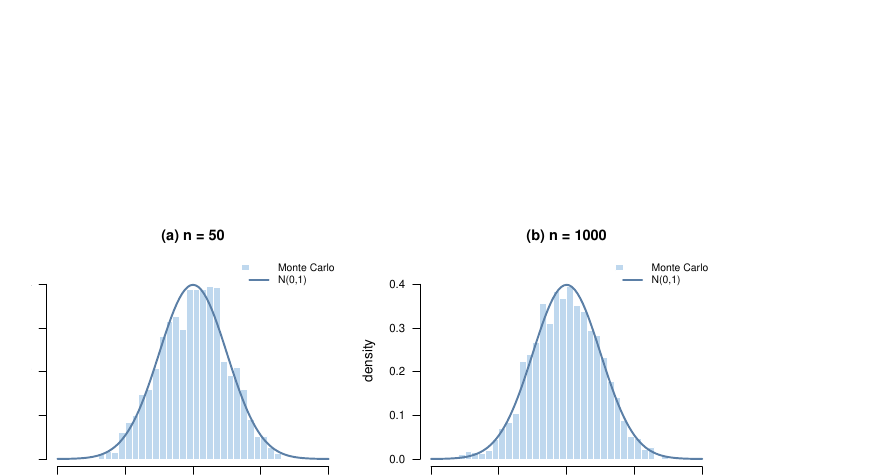}
\par\vspace{-0.6em}
{\small $\sqrt{nA''(\hat\eta_n)}\,(\hat\eta_n-\eta)$}
\par
\caption{Sampling distribution of the studentized maximum likelihood
statistic $\sqrt{n\,A''(\hat\eta_n)}\,(\hat\eta_n-\eta)$ at $\eta=2$ for the
Cauchy-chart family, with the $N(0,1)$ density overlaid; $B=2000$
replications. The Gaussian approximation is accurate already at $n=50$
(panel a) and essentially exact at $n=1000$ (panel b).}
\label{fig:sim-normality}
\end{figure}

\paragraph{Robustness under contamination.}
The final experiment places a fraction $\varepsilon$ of the mass of an
$\eta=2$ sample ($n=500$) at the fixed outlier $x_0=50$ and records the
induced bias of each estimator. Table~\ref{tab:robustness} compares the
Monte Carlo means with the \emph{exact} contaminated functionals: for the
maximum likelihood estimator the contaminated target solves
$A'(\eta_\varepsilon)=(1-\varepsilon)A'(2)+\varepsilon\,G(x_0)$, a direct
consequence of the estimating equation, and the median-pullback target is
obtained analogously from the contaminated median. The agreement is within
Monte Carlo error at every level, and the exact influence-function slope
$\IF(x_0)=\bigl(G(x_0)-A'(2)\bigr)/A''(2)\approx4.89$ predicts the initial
bias. Crucially the response is \emph{bounded}: as $x_0\to\infty$ the
outlier's leverage saturates at $\bigl(1-A'(2)\bigr)/A''(2)\approx4.98$ per
unit of $\varepsilon$, because the coordinate $G(x_0)$ cannot exceed $1$
(Figure~\ref{fig:sim-contamination}). A single arbitrarily large
\emph{positive} observation therefore moves the estimate by a finite,
computable amount---at most the upper-tail saturation
$\bigl(1-A'(2)\bigr)/A''(2)\approx4.98$ per unit of $\varepsilon$---the
exact opposite of what an unbounded sufficient statistic would produce. The
two-sided gross-error sensitivity $\gamma^*\approx9.52$ of
Proposition~\ref{prop:IF} is the larger, opposite-tail bound $A'(2)/A''(2)$,
attained only in the limit $x_0\to-\infty$ (a large \emph{negative} outlier,
where $G(x_0)\to0$), a configuration this experiment does not exercise. Over
most of the contamination range the maximum likelihood estimator is also the
more robust of the two, its exact bias curve lying below that of the
median-pullback for $\varepsilon\gtrsim0.05$; the two exact curves cross near
$\varepsilon\approx0.047$, below which the median-pullback carries a
marginally smaller bias---its initial slope $4.65$ against the maximum
likelihood estimator's $\IF(x_0)\approx4.89$---a gap that is within Monte
Carlo error.

\begin{table}[t]
\centering
\small
\begin{tabular}{c rr@{\ }l rr@{\ }l}
\toprule
& \multicolumn{3}{c}{MLE $\hat\eta_n$}
  & \multicolumn{3}{c}{median-pullback $\tilde\eta_n$}\\
\cmidrule(lr){2-4}\cmidrule(lr){5-7}
$\varepsilon$ & exact & \multicolumn{2}{c}{Monte Carlo}
              & exact & \multicolumn{2}{c}{Monte Carlo}\\
\midrule
% --- Monte Carlo values (B = 2000, seed 2026, n = 500, x0 = 50) ---
0.00 & 0.0000 & 0.0013 & (0.0038) & 0.0000 & 0.0036 & (0.0047) \\
0.02 & 0.0986 & 0.0976 & (0.0039) & 0.0957 & 0.1027 & (0.0049) \\
0.05 & 0.2500 & 0.2469 & (0.0040) & 0.2509 & 0.2483 & (0.0051) \\
0.10 & 0.5129 & 0.5139 & (0.0044) & 0.5477 & 0.5535 & (0.0059) \\
0.15 & 0.7911 & 0.8039 & (0.0046) & 0.9084 & 0.9320 & (0.0066) \\
0.20 & 1.0881 & 1.0859 & (0.0050) & 1.3634 & 1.3713 & (0.0087) \\
\bottomrule
\end{tabular}
\caption{Bias of the two estimators under $\varepsilon$-contamination at
$x_0=50$ ($\eta=2$, $n=500$, $B=2000$ replications, seed \texttt{2026}).
``exact'' is the bias of the population contaminated functional; the Monte
Carlo mean is reported with its standard error in parentheses. The two
agree within simulation error, and the bias remains finite and bounded at
every contamination level.}
\label{tab:robustness}
\end{table}

\begin{figure}[t]
\centering
\includegraphics[width=0.78\textwidth]{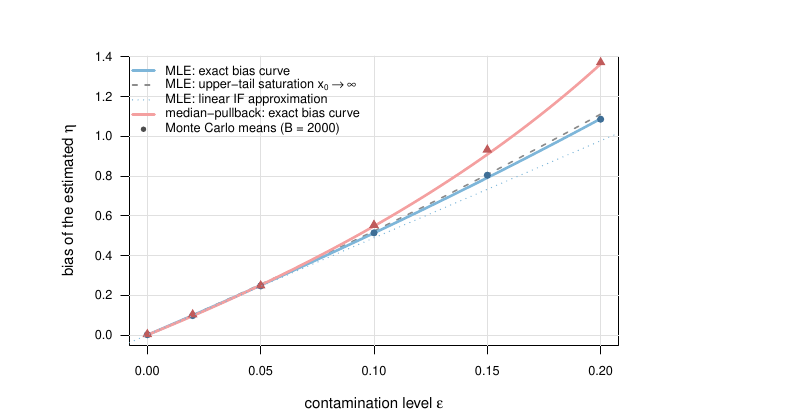}
\caption{Bias under $\varepsilon$-contamination at $x_0=50$ for the
Cauchy-chart family ($\eta=2$, $n=500$). Solid curves are the exact
contaminated functionals of the maximum likelihood estimator (blue) and
the median-pullback estimator (rose); markers are Monte Carlo means over
$B=2000$ replications. The dotted line is the linear influence-function
approximation $\varepsilon\,\IF(x_0)$, and the dashed line is the
upper-tail saturation slope as $x_0\to\infty$, at which a positive outlier's
leverage saturates because $G(x_0)<1$. Robustness is a structural consequence of
the bounded coordinate, not of any explicit downweighting.}
\label{fig:sim-contamination}
\end{figure}

\paragraph{Summary.}
The study confirms, on a family with no finite mean, the three statistical
claims of Section~\ref{sec:inference}: the transported maximum likelihood
estimator is $\sqrt n$-consistent and asymptotically efficient with
calibrated Wald inference, its studentized law is Gaussian in moderate
samples, and its influence is bounded with the exact gross-error
sensitivity predicted by the theory. Robustness here is not imposed but
inherited from the compactness of probability coordinates, exactly the
mechanism of the companion inference paper \cite{Cojocea2026b}.

\section{Comparison with deformed exponentials}
\label{sec:deformed}

Generalized exponential families in the literature are predominantly
obtained by algebraic deformation. The Tsallis $q$-exponential
\cite{Tsallis1988},
\[
\exp_q(t)=\bigl[1+(1-q)t\bigr]_+^{1/(1-q)},
\]
the Kaniadakis $\kappa$-exponential \cite{Kaniadakis2002}, and the
deformed-logarithm families systematized by Naudts \cite{Naudts2011}
modify the exponential function so that log-affine structure coexists with
power-law decay; R\'enyi's construction \cite{Renyi1961} instead deforms
the aggregation of probability weights inside entropy, using generalized
means in the Kolmogorov--Nagumo lineage \cite{Kolmogorov1930,Nagumo1930}.
Escort transformations \cite{Tsallis1988} act at yet another level,
reweighting the probability measure itself.

The present construction touches none of these levers: the exponential
function is classical, the aggregation is linear, and the measure is
untouched. What is deformed is the coordinate system through which
exponential structure is read, and Sections
\ref{sec:tails}--\ref{sec:entropy} quantify exactly what this geometric
lever does (tails, support, barycenters, robustness) and does not
(relative entropy, Fisher information) control.

The two routes can represent the same law, and the comparison is
instructive. The standard Cauchy density admits the algebraic
representation
\[
\frac1{\pi(1+x^2)}=\frac1\pi\exp_2(-x^2),
\]
a $q$-exponential quadratic-energy model with $q=2$. In the present
framework the same law appears instead as the \emph{chart law}---the
$\eta=0$ member of the Cauchy-chart family of
Section~\ref{sec:example}---and its heavy tails are carried by the
geometry $G$, which then transmits them uniformly to every member of the
family (Corollary~\ref{cor:rv}). In the algebraic reading, heavy tails are
produced by weakening the exponential function; in the geometric reading,
they are produced by the choice of probability coordinates, while the
exponential function operates intact on the compact probability scale. The
statistical consequences differ accordingly: bounded sufficient statistics
and bounded-influence likelihood are automatic in the geometric reading
(Section~\ref{sec:inference}), because boundedness is supplied by the
coordinate system rather than negotiated with the tail of the deformed
exponential.

We deliberately do not use the notation $\exp_G$ for any deformed
function: in this framework no deformed exponential exists, and attaching
the chart symbol to an algebraically deformed function would misstate
where the deformation lives.

\section{Towards intrinsic geometric exponentiality}\label{sec:intrinsic}

The transported theory developed above is the rigorous layer of a broader
question. Transported exponentiality still applies the ordinary
exponential function in probability coordinates; one may ask whether
probability geometry admits \emph{intrinsically} geometric notions of
logarithm and exponential---operations $\log_G$ and $\exp_G$ genuinely
constructed from the chart, for which log-affine structure in the
resulting scale defines a strictly larger class of models than
\eqref{eq:gexp}.

We record the question but do not pursue it here, for a reason the
companion papers make familiar: the constraints such operations must
satisfy---algebraic consistency, compatibility with normalization,
invariance under the reparametrizations of \cite{Cojocea2026a}, and a
statistical interpretation of the resulting sufficient statistics---are
severe, and nothing in the transported theory forces a canonical choice.
The rigidity phenomena of \cite{Cojocea2026a} (only affine
reparametrizations of the probability scale preserve barycenters) suggest
that a canonical intrinsic exponential, if one exists, will be tightly
constrained. Until such a construction is exhibited and its admissibility
established, intrinsic geometric exponentiality remains a programmatic
direction, and the claims of this paper are confined to the transported
framework, where every statement is a theorem.

\section{Discussion and outlook}\label{sec:discussion}

This paper extended the probability-coordinate framework of the companion
papers from location functionals to parametric families: classical
exponential structure, imposed on the compact probability scale and
transported through a chart, generates families in which heavy tails,
asymmetry, and boundary concentration are geometric inheritances from the
chart rather than algebraic properties of a deformed exponential.

Three structural findings organize the theory. First, the pushforward
identity: a $G$-exponential family is precisely the family whose
probability coordinates follow a classical exponential family, so the
entire barycentric machinery of \cite{Cojocea2026a} computes its canonical
parameters---the probability barycenter is the pulled-back mean parameter,
and the Kolmogorov variance is the Fisher information. Second, the
separation of levers: the chart controls tails, support, and value-space
geometry (Theorem~\ref{thm:tail-equivalence}), while the exponential
structure controls distinguishability, and the two do not interact
(Proposition~\ref{prop:invariance})---tilting is tail-neutral, and
transport is information-neutral. Third, robustness by construction: the
sufficient statistic is bounded because the coordinate system is bounded,
so maximum likelihood has bounded influence with no modification of the
likelihood, realizing robustness through geometric compactification as in
\cite{Cojocea2026b} at the level of model classes.

Several directions remain open. The anchoring caveat of
Section~\ref{sec:inference} calls for an estimated-chart extension: a
two-step theory in the sense of \cite{Cojocea2026b} in which the chart
family $G_{\mu,\sigma}=G_0((\cdot-\mu)/\sigma)$ is calibrated before the
exponential parameter is estimated, restoring equivariance at the price of
a calibration correction. Chart selection---which benchmark geometry to
impose, and how to compare fits across charts given the information
invariance of Proposition~\ref{prop:invariance}---is a model-choice
problem specific to this framework. Multivariate extensions through the
copula coordinates of \cite{Cojocea2026a} would transport exponential
families on the unit cube. Finally, the intrinsic question of
Section~\ref{sec:intrinsic}---whether probability geometry generates its
own exponential calculus---remains, in our view, the deepest open
direction, and the transported theory developed here is intended as its
rigorous point of departure.

\end{document}